\selectfont\symbol{60}\fontencoding{\encodingdefault}}
\selectfont\symbol{62}\fontencoding{\encodingdefault}}
\newtheoremstyle{dotless}{}{}{\itshape}{}{\bfseries}{}{ }{}
\theoremstyle{dotless}
\newtheorem{proposition}{Proposition}[section]
\newtheorem{corollary}{Corollary}[section]
\newtheorem{theorem}{Theorem}[section]
\newtheorem{lemma}{Lemma}[section]
\renewenvironment{proof}[1][Proof]{\textbf{#1.} }{\ \rule{0.5em}{0.5em}}
\theoremstyle{definition}
\DeclareMathOperator{\supp}{supp}
\DeclareMathOperator{\capp}{Cap}
\begin{document}

\title[Trace inequalities for the multi-parameter Hardy operators on poly-trees]{Some properties related to trace inequalities for the multi-parameter Hardy operators on poly-trees}

\author{Nicola Arcozzi, Pavel Mozolyako, Karl-Mikael Perfekt}

\thanks{The results of Section \ref{SS:2.4} are supported by the Russian Science Foundation under the grant 17-11-01064}
\thanks{N. Arcozzi is partially supported by the grants INDAM-GNAMPA 2017 "Operatori e disuguaglianze integrali in spazi con simmetrie" and PRIN 2018 "Variet\`{a} reali e complesse: geometria, topologia e analisi armonica"}

\subjclass[2010]{31B15, 31B30, 32A37, 42B25}

\begin{abstract}
In this note we investigate the multi-parameter Potential Theory on the weighted $d$-tree (Cartesian product of several copies of uniform dyadic tree), which is connected to the discrete models of weighted Dirichlet spaces on the polydisc. We establish some basic properties of the respective potentials, capacities and equilibrium measures (in particular in the case of product polynomial weights). We explore multi-parameter Hardy inequality and its trace measures, and discuss some open problems of potential-theoretic and combinatorial nature.
\end{abstract}

\maketitle
\date{\today}
\section{Introduction}\label{S:1}
The Hardy operator on the set of the positive integers is given by $I\varphi(n)=\sum_{l=0}^n\varphi(l)$. The problem of characterizing the positive weights 
$u,v:{\mathbb N}\to{\mathbb R}^+$ such that
\begin{equation}\label{hardy1}
\|I\varphi\|_{\ell^p(v)}\le C\|\varphi\|_{\ell^p(u)}
\end{equation}
for some positive $C$, depending only on $1<p<\infty,u$, and $v$, has long been considered and solved. It was only rather recently \cite{ars2002}
that an analogous problem was considered 
on trees. Let $T$ be a tree with vertex set $V(T)\ni o$, where $o$ is a \it root \rm of $T$, and define, for $\varphi:V(T)\to\mathbb{R}_+$, the function
$I\varphi:V(T)\to\mathbb{R}_+$ as $I\varphi(\alpha)=\sum_{\beta\in[o,\alpha]}\varphi(\beta)$, where $[o,\alpha]$ is the ``geodesic'' joining $\alpha$ to the root.
In fact, $\mathbb{N}$ is a particular tree, but general trees might exhibit the exponential growth, with respect to $n$, of the number of  points having distance $n$ 
to a distinguished vertex. The usual dyadic tree is a typical example. We postpone the precise definition of the Hardy operator on trees to the next section,
where the necessary notation is introduced.

Characterizing the two-weight inequality for the Hardy operator on trees led to a new characterization of the Carleson measures for the Dirichlet space, 
a result originally due to 
Stegenga \cite{stegenga1980}, which applied, in fact, to a wide range of exponents, weights, and underlying spaces. 
In its simpler form, one wants to characterize measure 
$\mu\ge0$ on the unit disc $\{z\in\mathbb{C}:\ |z|<1\}$ in the complex plane, such that
\begin{equation}\label{hardy2}
\iint_{\mathbb{D}}|f(z)|^2d\mu(z)\le C\iint_{\mathbb{D}}|f^\prime(z)|^2dxdy
\end{equation}
for all holomorphic functions satisfying $f(0)=0$. The connection between the holomorphic problem and the discrete one might be summarized as follows.
The function $f$ on $\mathbb{D}$ is somehow identified with the function $I\varphi$, the function $f^\prime$ with $\varphi$ (the ``derivative'' of $I\varphi$),
the unit disc $\mathbb{D}$ with the tree $T$ which indexes its dyadic Whitney decomposition.

In 1985, E. Sawyer \cite{sawyer1985} considered the extension of (\ref{hardy1}) to the bi-linear case: $II\varphi(m,n)=\sum_{i=0}^m\sum_{j=0}^n\varphi(i,j)$, with
$\varphi:\mathbb{N}^2\to\mathbb{R}_+$. He characterized the two weight inequality for the bi-linear Hardy operator $II$, and it should be mentioned that the tri-linear inequality still awaits a 
characterization. 

Recently, we and Giulia Sarfatti \cite{amps2018} considered the problem of characterizing the Carleson measures for the Dirichlet space 
on the bi-disc, which may be thought of $\mathcal{D}(\mathbb{D}^2)\equiv\mathcal{D}(\mathbb{D})\otimes\mathcal{D}(\mathbb{D})$. The first step is reducing the problem to one on the bi-tree: 
the Cartesian product of two copies of the tree, with the corresponding Hardy operator defined by summation on Cartesian products of geodesics, 
as in Sawyer's result. We could not modify the proof of Sawyer, however, to make it work in the bi-tree case. Our proof follows Stegenga's idea of 
proving a \textit{capacitary strong type inequality}, which is the heart of the proof.

In this note, we prove some results in multi-linear potential theory, which might prove useful in extending the results in \cite{amps2018} to (i) polytrees 
(with more than two factors), (ii) with weights. In Section \ref{SS:2.4} we prove that the capacity of a subset $E$ of the polytree $T^d=T\times\dots\times T$
is comparable to that of its projection $\mathcal{S}_b(E)$ onto the \it distinguished boundary \rm $(\partial T)^d$ of $T^d$. The novelty is that we consider
the discrete problem arising from the study  of the Potential Theory associated with \it weighted \rm Dirichlet spaces, 
which have not been so far investigated. 
In Section \ref{oggi}, we give two noncapacitary sufficient conditions for a measure to satisfy the trace inequality for the multilinear Hardy operator on a polytree.\\
Throughout this paper we refer to some basic facts from potential theory, as presented in \cite[Chapter 2]{ah1996}.

\section{Weighted $d$-tree and potential theory}\label{S:2}
\subsection{A $d$-tree}
As in \cite{amps2018} we start by considering the rooted directed (away from the root) uniform infinite binary tree (\textit{a dyadic tree}). The order relation on the vertex set $V(T)$ is given by direction: for $\alpha,\beta\in V(T)$ we say that $\alpha \leq \beta$, if one can get from $\beta$ to $\alpha$ following the directed root. In other words, $\beta$ is one of the endpoint of the edges in the geodesic $[\alpha,o]$ connecting $\alpha$ and the root $o$. We also write $\alpha < \beta$, if $\alpha\leq\beta$, and $\alpha\neq \beta$. The boundary $\partial T$ of  the tree is defined in a standard way; each point $\omega \in \partial T$ is encoded as an infinite directed sequence $[e^0,e^1,\dots]\subset E(T)$ of connected edges that starts at the root $o$ (i.e. $o$ is the endpoint of $e^0$). The order relation makes sense for $\partial T$ as well, given $\omega\in \partial T$  we say that $\omega \leq \alpha$, if and only if  $\alpha$ is an endpoint of one of the edges $e^{k}$ encoding $\omega$, or $\alpha = \omega$. We write $\overline{T} := T\bigcup \partial T$. In what follows we identify the vertex set $V(T)$ and the tree itself, i.e. we assume that $\alpha\in T$ is always a vertex.\\
If $\alpha,\beta \in \overline{T}$, then there there exists a unique point $\gamma \in \overline{T}$ that is \textit{the least common ancestor} of $\alpha$ and $\beta$, we denote it by $\alpha\wedge\beta$. Namely, we have that $\gamma\geq\alpha,\;\gamma\geq\beta$, and if there is another point $\tilde{\gamma}$ satisfying these relations, then $\tilde{\gamma}\geq\gamma$ (basically $\gamma$ is the first intersection points of geodesics connecting $\alpha$ and $\beta$ to the root). In particular, $\alpha\wedge\alpha = \alpha$. The total amount of common ancestors of $\alpha$ and $\beta$ is denoted by $d_T(\alpha\wedge\beta)$ ( $d_T(\alpha\wedge\beta) = dist_T(\alpha\wedge\beta,o)+1$, where $dist_T$ is the usual graph distance on $T$). $d_T$ can be infinite, for instance, $d_T(\omega\wedge\omega) = \infty$ when $\omega\in\partial T$. The \textit{predecessor set} (with respect to the geometry of $\overline{T}$) of a point $\alpha\in V(T)\cup\partial T$ is
\[
\mathcal{P}(\alpha) = \{\beta\in \overline{T}:\; \beta\geq\alpha\}.
\]
In particular, every point is its own predecessor. The \textit{successor set} is
\[
\mathcal{S}(\beta) := \{\alpha\in\overline{T}:\; \beta\in\mathcal{P}(\alpha)\},\quad \beta\in \overline{T}.
\]
Clearly $d_T(\alpha\wedge\beta) = \sharp\mathcal{P}(\alpha\wedge\beta)$.\par
We are now ready to define the $d$-tree. Fix an integer $d$, and consider $T_1,T_2,\dots, T_d$ --- identical copies of the dyadic tree $T$. The vertex set $V(T)^d$ of the  graph $T^d$ is defined as follows
\begin{equation}\notag
V(T^d):= V(T)^d = V(T_1)\times V(T_2)\times\dots\times V(T_d),
\end{equation}
i.e. $\alpha \in V(T^d)$, if $\alpha = (\alpha_1,\dots,\alpha_d)$ with $\alpha_j\in T_j,\; j=1,\dots,d$. Two vertices $\alpha,\beta\in V(T^d)$ are connected by an edge, if and only if there exists a number $1\leq j\leq d$ such that $\alpha_j$ and $\beta_j$ are connected by an edge in $T_j$, and $\alpha_k = \beta_k$ for any $k\neq j$. As before, we usually identify $V(T^d)$ and $T^d$.\\
The order relation on $T^d$ is induced by the order on its coordinate trees, we say that $\alpha \leq \beta$, if $\alpha_j \leq\beta_j$ for every $1\leq j \leq d$. The boundary of the $d$-tree is
\[
\partial T^d  = \bigcup_{D\subset \{1,2,\dots,d\}}\prod_{j\in D}T_j\prod_{k\in \{1,2,\dots,d\}\setminus D}\partial T_k
\]
(the Cartesian products are taken according to the order of indices). The set  $\partial T_1\times\partial T_2\times\dots\times\partial T_d $ is called a \textit{distinguished boundary} of $T^d$ and denoted by $(\partial T)^d$. We let $\overline{T}^d = T^d\bigcup\partial T^d$. As before, we define predecessor and successor sets of a vertex $\alpha = (\alpha_1,,\dots,\alpha_d)$ using the same notation
\[
\mathcal{P}(\alpha) = \mathcal{P}(\alpha_1)\times\dots \times\mathcal{P}(\alpha_d),\; \mathcal{S}(\alpha) = \mathcal{S}(\alpha_1)\times\dots\times\mathcal{S}(\alpha_d).
\]
Sometimes we specify the dimension writing $\mathcal{S}_{T}(\alpha)$ for a point $\alpha$ in the tree $T$, and $\mathcal{S}_{T^d}(\alpha)$ for a point $\alpha$ in the $d$-tree (same goes for the predecessor sets). The part of $\mathcal{S}(\alpha)$ that lies on the distinguished boundary is denoted by $\partial \mathcal{S}(\alpha)$.\\
Similar to one-dimensional setting we denote the number (possibly infinite) of common ancestors of $\alpha$ and $\beta$ by $d_{T^d}(\alpha\wedge\beta)$, where $\alpha\wedge\beta = (\alpha_1\wedge\beta_1,\dots,\alpha_d\wedge\beta_d)$ is a (unique) least common ancestor (in $T^d$) of $\alpha$ and $\beta$. The predecessor and successor sets are defined as above (and denoted in the same way). We have 
\[d_{T^d}(\alpha\wedge\beta) = \prod_{j=1}^d d_T(\alpha_j\wedge\beta_j) = \sharp\mathcal{P}(\alpha\wedge\beta).
\]
We also write $d_T(\alpha_j)$ and $d_{T^d}(\beta)$ instead of $d_T(\alpha_j\wedge\alpha_j)$ and $d_{T^d}(\beta\wedge\beta)$.  \\

\subsection{Potential theory on $d$-tree}\label{SS:2.35}
Before we introduce the basics of potential theory on the $d$-tree we  adapt our space to the conventions used in \cite{ah1996}.

First we define a metric on $\overline{T}_j$: given $\alpha_j,\beta_j\in \overline{T}_j$ we set
\begin{equation}\notag
\delta_j(\alpha_j,\beta_j) := 2^{-d_T(\alpha_j\wedge\beta_j)} - \frac12\left(2^{-d_T(\alpha_j)} + 2^{-d_T(\beta_j)}\right),
\end{equation}
essentially this is a distance associated to the graph distance on $T$ with weights $2^{-dist_T(\alpha_j,o)}$. Then we let
\begin{equation}\label{e:31}
\delta(\alpha,\beta) = \sum_{j=1}^d\delta_j(\alpha_j,\beta_j),\quad \alpha,\beta\in \overline{T}^d.
\end{equation}
Clearly, $\delta$ is a metric on $\overline{T}^d$.\\
We suggest two ways of interpreting a $d$-tree, the first one is less natural in a sense, but it allows us to properly use the machinery in \cite{ah1996}. The dyadic tree is a planar graph, and one can embed it into $\mathbb{R}^2$ in such a way that its boundary $\partial T_j$ is actually a classical ternary Cantor set $E_c$ on the unit interval. As a result we can assume that $\overline{T}_j\subset \mathbb{R}^2$, moreover, embedded with $\delta_j$ it is a locally compact Radon space, and Borel sets in $\overline{T}_j$ are Borel in $\mathbb{R}^2$.
In the same vein, the points of $T^d$ embed into $\mathbb{R}^{2d}$. In particular $(\partial T)^d$ can be identified with $E_c^2$.\par 

Let $\pi$ be a positive Borel measure on $T^d$, that is, collection of positive weights on vertices of $T^d$ -- we always assume $\pi$ has zero mass on $\partial T^d$. Denote by $\mathbb{M}$ the (open) $d$-tree $T^d$ equipped with  measure $\pi$ and a family of Borel (with respect to the distance $\delta$) measurable sets. We define a kernel $G:\mathbb{R}^{2d}\times\mathbb{M}\rightarrow \mathbb{R}_+$ to be $G(\alpha,\beta) := \chi_{\mathcal{S}_{\beta}}(\alpha)$, where $\alpha\in \overline{T}^d\subset\mathbb{R}^{2d}$, $\beta\in T^d$ and $\mathcal{S}_{\beta} := \{\gamma\in\overline{T}^d:\; \gamma\leq\beta\}$ is the $\overline{T}^d$-successor set of $\beta$. It is easy to verify that $G$ is lower semicontinuous on $\overline{T}$ in first variable, and measurable on $\mathbb{M}$ in second variable. This means that we are now squarely in the context of Adams and Hedberg (\cite[Chapter 2.3]{ah1996}), and we can proceed with the Potential Theory. Given a non-negative Borel measure $\mu$ on $\overline{T}^d$ (which, again, is by extension Borel on $\mathbb{R}^{2d}$) and a non-negative $\pi$-measurable function $f$ on $\mathbb{M}$ we let
\begin{subequations}\label{e:32}
\begin{eqnarray}
\label{e:32.1}& (\mathbb{I}f)(\alpha) := \int_{\mathbb{M}}G(\alpha,\beta)f(\beta)\,d\pi(\beta) = \sum_{\gamma\geq\alpha}f(\gamma)\pi(\gamma),\\
\label{e:32.2}& (\mathbb{I}^*\mu)(\beta) := \int_{\bar{T}^2}G(\alpha,\beta)\,d\mu(\alpha) = \int_{\mathcal{S}(\beta)}\,d\mu(\alpha).
\end{eqnarray}
\end{subequations}
Observe that a measure supported on $T^2$ and a non-negative function are pretty much the same objects --- a collection of masses assigned to the points of the $d$-tree. The Potential Theory generated by these two operators leads us to the notions of $\pi$-potential 
\begin{equation}\label{e:33}
\mathbb{V}_{\pi}^{\mu} := (\mathbb{I}\mathbb{I}^*)(\mu)
\end{equation}
and capacity
\begin{equation}\label{e:334}
\capp_{\pi} E := \inf\left\{\int f^2\,d\pi:\; f\geq0,\,(\mathbb{I}f)(\alpha)\geq1,\; \alpha\in E\right\},\quad E\subset \overline{T}^d.
\end{equation}
Given two Borel measures $\mu,\nu\geq0$ on $\overline{T}^d$ we define their mutual energy to be
\begin{equation}\label{e:337}
\mathcal{E}_{\pi}[\mu,\nu] := \int_{\overline{T}^d}\mathbb{V}^{\mu}\,d\nu = \int_{\overline{T}^d}\mathbb{V}^{\nu}\,d\mu = \sum_{\alpha\in T^d}(\mathbb{I}^*\mu)(\alpha)(\mathbb{I}^*\nu)(\alpha)\pi(\alpha) = \langle\mathbb{I^*}\mu,\mathbb{I}^*\nu\rangle_{L^2(T^d,\pi)},
\end{equation}
the last two equalities following from Tonelli's theorem. When $\mu=\nu$ we write $\mathcal{E}_{\pi}[\mu]$ instead, and we call it \textit{the energy of $\mu$}. Given a Borel set $E\subset \overline{T}^d$ there exists a uniquely defined \textit{equilibrium measure} $\mu_E\geq0$ that generates the minimizer in \eqref{e:334}, so that
\[
\capp_{\pi} E = \int_{T^d}(\mathbb{I}^*\mu_E)^2\,d\pi = \mathcal{E}_{\pi}[\mu_E] = \mu_E(E)
\]
(see \cite{ah1996}). If $E$ is a compact set, then one also has $\supp\mu_E\subset E$.\par
Another way to look at the $d$-tree (which is more convenient and tangible) is the dyadic rectangle representation. It is well known that a dyadic tree can be interpreted as a collection of dyadic subintervals of some basic interval (say, $[0,1]$), with a natural order given by inclusion. This approach is not without its own problems though, since $\partial T$ and $[0,1]$ do not have a one-to-one correspondence --- dyadic-rational points can be images of two different elements of $\partial T$. This obstacle however is is not relevant in the context of the potential theory we have developed, since the measures we are working with do not distinguish these points. In other words, if the measure has finite energy for an appropriate choice of weight $\pi$, its total mass on the non-injective set is zero, see Lemma \ref{l:1}. That means that for every point $\alpha\in T^d$ there exists a unique  dyadic rectangle $R_{\alpha} = \prod_{j=1}^d[k_j2^{-n_j},(k_j+1)2^{-n_j}]\subset [0,1]^d$ with $n_j\geq0$ and $0\leq k_j\leq 2^{n_j}-1$, and vice-versa, every such dyadic rectangle corresponds to a point $\alpha\in T^d$. In the same way, the \textit{distinguished boundary} can be roughly viewed as the unit cube $[0,1]^d$ (again, the problematic points are not seen by finite energy measures). The rest of $\partial T^d$ is visualized similarly. \\
This representation makes it clear that a $d$-tree (for $d\geq 2$) is \textit{not a tree}, since, for instance, every point has several geodesics connecting it to the root $(o_1,\dots,o_d)$, and $T^d$ has a lot of cycles. However $T^d$ still have some structural properties inherited from the geometry of $T$, in particular \textit{it does not have any directed cycles}. This allows us to salvage some of the arguments used in one-dimensional case.\\
As usual, we write $A \lesssim B$ if there exists a constant $C$ (that depends only on $d$, $\pi$, and whose value may change from line to line) such that $A \leq CB$, and $A\approx B$, if $A\lesssim B$ and $B\lesssim A$.

\section{Properties of potentials and standard polynomial weights}\label{SS:2.4} 
\subsection{Basic properties of potentials and capacities}
We call $\pi$ a product weight, if $\pi(\beta) = \prod_{j=1}^n\pi_j(\beta_j)$, where $\pi_j$ is a weight on $T_j$.
\begin{lemma}\label{l:1}
Assume $\pi$ is bounded away from zero. Then the following properties hold:
\begin{enumerate}
\item If $\mu\geq0$ is a Borel measure on $(\partial T)^d$ with finite energy, then $\mu (\{\omega_j\}\times\prod_{k\neq j}\partial T_k) = 0$ for any $\omega_j \in \partial T_j,\; 1\leq j\leq d$.\label{l:A1.s.1}
\item Assume $\pi$ is a product weight. Let $E\subset \overline{T}^d$ be a Borel set. Define $E_j\subset \overline{T}_j$ to be its coordinate projections, i.e. $\alpha_j\in E_j$, if there exist points $\alpha_k\in \overline{T}_k,\; k\neq j$ such that $(\alpha_1,\dots,\alpha_j,\dots,\alpha_d) = \alpha \in E$. Then
\begin{equation}\label{e:l1.1}
\capp_{\pi} E \leq \prod_{j=1}^d \capp_{\pi_j}E_j.
\end{equation}
In particular, if $E$ is a product set, $E = E_1\times\dots\times E_d$, then we have equality in \eqref{e:l1.1}.
\label{l:A1.s.0}
\item  Let $E$ be a Borel subset of $\overline{T}^d$ and $\mu\geq0$ be a Borel measure on $\overline{T}^d$ such that $\mathcal{E}_{\pi}[\mu]<\infty$ and $\mathbb{V}_{\pi}^{\nu} \leq 1$ q.a.e. on $E$. Then $\capp_{\pi} E \geq \mu(E)$. \label{l:A1.s.2}
\item  Let $E$ be a Borel subset of $\overline{T}^d$ and $\mu\geq0$ be a Borel measure on $\overline{T}^d$ with finite mass such that $\mu(E) \geq \mathcal{E}_{\pi}[\mu]$. Then $\capp_{\pi} E \geq \mu(E)$. \label{l:A1.s.2.5}

\end{enumerate}
\end{lemma}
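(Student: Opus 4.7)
My plan is to take the four parts in the order $(1)$, upper bound of $(2)$, $(4)$, $(3)$, and finally the equality statement of $(2)$ via $(4)$, since that order avoids any circularity.

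For $(1)$ I would fix $S=\{\omega_j\}\times\prod_{k\neq j}\partial T_k$ with $m=\mu(S)$, and note that for each of the infinitely many vertices $\alpha_j$ on the geodesic from $o_j$ to $\omega_j$ the point $\alpha=(o_1,\ldots,\alpha_j,\ldots,o_d)\in T^d$ satisfies $S\subset\mathcal{S}(\alpha)$, hence $\mathbb{I}^*\mu(\alpha)\geq m$. Summing these contributions in \eqref{e:337} and using $\pi\geq c>0$ produces a divergent lower bound on $\mathcal{E}_\pi[\mu]$, so finiteness of the energy forces $m=0$. The upper bound in $(2)$ is the usual tensor argument: for $\varepsilon>0$ pick $f_j\geq 0$ on $T_j$ with $\mathbb{I}_j f_j\geq 1$ on $E_j$ and $\int f_j^2\,d\pi_j\leq\capp_{\pi_j}E_j+\varepsilon$, and set $f(\alpha)=\prod_j f_j(\alpha_j)$. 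Tonelli and the product form of $\pi$ give $\mathbb{I}f(\alpha)=\prod_j\mathbb{I}_j f_j(\alpha_j)\geq 1$ on $E$, while $\int f^2\,d\pi=\prod_j\int f_j^2\,d\pi_j$; letting $\varepsilon\to 0$ yields \eqref{e:l1.1}.

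Parts $(4)$ and $(3)$ I handle via the equilibrium measure $\nu=\nu_E$, which satisfies $\mathbb{V}_\pi^\nu\geq 1$ q.a.e.\ on $E$ and $\mathcal{E}_\pi[\nu]=\capp_\pi E=\nu(E)$. Using that finite-energy measures annihilate sets of zero capacity, for $(4)$ I integrate the q.a.e.\ bound against $\mu$ and apply Cauchy--Schwarz through the $L^2(\pi)$ representation \eqref{e:337}:
\[
\mu(E)\leq\int\mathbb{V}_\pi^\nu\,d\mu=\mathcal{E}_\pi[\mu,\nu]\leq\sqrt{\mathcal{E}_\pi[\mu]\,\mathcal{E}_\pi[\nu]}\leq\sqrt{\mu(E)\,\capp_\pi E},
\]
where the last step uses the hypothesis $\mathcal{E}_\pi[\mu]\leq\mu(E)$; squaring and cancelling $\sqrt{\mu(E)}$ yields $\mu(E)\leq\capp_\pi E$. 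For $(3)$ the hypothesis $\mathbb{V}_\pi^\mu\leq 1$ q.a.e.\ on $E$ holds $\nu$-a.e.\ (again since $\nu$ has finite energy), and the direct dualization
\[
\mu(E)\leq\int\mathbb{V}_\pi^\nu\,d\mu=\int\mathbb{V}_\pi^\mu\,d\nu\leq\nu(E)=\capp_\pi E
\]
completes the argument. For the equality in $(2)$ on a product set $E=E_1\times\cdots\times E_d$, I would take $\mu=\nu_{E_1}\otimes\cdots\otimes\nu_{E_d}$; the product structure of $G$ and $\pi$ via Tonelli gives $\mathbb{I}^*\mu(\beta)=\prod_j\mathbb{I}_j^*\nu_{E_j}(\beta_j)$ and thence $\mathcal{E}_\pi[\mu]=\prod_j\capp_{\pi_j}E_j=\mu(E)$, so $(4)$ supplies the lower bound matching the upper bound.

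The main friction will be the bookkeeping around quasi-everywhere statements in $(3)$ and $(4)$. I will need to transfer to the present weighted polytree setting two standard potential-theoretic facts from Chapter~2 of Adams--Hedberg: that measures of finite energy give no mass to sets of zero capacity, and that the equilibrium potential $\mathbb{V}_\pi^{\nu_E}$ is at least $1$ quasi-everywhere on $E$, with $\supp\nu_E\subset E$ in the compact case and the general Borel case following by inner regularity.
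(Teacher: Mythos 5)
Your proposal is correct and follows essentially the same route as the paper: the same divergent-energy argument for (1), the same tensor-of-admissible-functions argument for the inequality in (2), and the same equilibrium-measure/mutual-energy estimates (with Cauchy--Schwarz made explicit) for (3) and (4). The only cosmetic divergence is the equality in (2), which you close by applying part (4) to the product of equilibrium measures rather than invoking the dual sup-characterization of capacity as the paper does; both hinge on the same multiplicativity computation $\mathbb{V}_{\pi}^{\mu_1\otimes\cdots\otimes\mu_d}=\prod_j\mathbb{V}_{\pi_j}^{\mu_j}$, so nothing essential changes.
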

\begin{proof}
\textit{Property \ref{l:A1.s.1}.}\;\;
Assume $j=1$ and $\mu(\{\omega_1\}\times\prod_{k=2}^d\partial T_k) =\varepsilon >0$ for some $\omega_1\in \partial T_1$. Then we immediately have $(\mathbb{I}^*\nu)(\alpha_1,o_2,\dots,o_d) \geq \varepsilon$ for any $\alpha_1>\omega_1$, and
\[
\mathcal{E}_{\pi}[\mu] = \sum_{\alpha\in T^d}(\mathbb{I}^*\nu)^2(\alpha)\pi(\alpha) \geq \sum_{\alpha_1>\omega_1}(\mathbb{I}^*\nu)^2(\alpha_1,o_2,\dots,o_d)\pi(\alpha_1,o_2,\dots,o_d) = \infty,
\]
since $\pi \geq 1$ (actually that is the only thing we need from the weight here).\par
\textit{Property \ref{l:A1.s.0}.}\;\; For every $1\leq j\leq d$ let $f_j$ be some admissible for $E_j$ function (so that\\ $\sum_{\beta_j\geq\alpha_j}f_j(\beta_j)\pi_{j}(\beta) \geq 1$ for every $\alpha_j\in E_j$). Define
\[
f(\beta) = \prod_{j=1}^df_j(\beta_j),\; \beta\in T^d.
\]
Since $\pi = \prod_{j=1}^d\pi_j$, we clearly have
\[
\sum_{\beta\geq\alpha}f(\beta)\pi(\beta) = \prod_{j=1}^d\sum_{\beta_j\geq\alpha_j}f(\beta_j)\pi(\beta_j)
\]
for any $\alpha\in \overline{T}^d$. Therefore $f$ is admissible for $\prod_{j=1}^d E_j$. In the same fashion, $\|f\|^2_{L^2(T^d,d\pi)} = \prod_{j=1}^d\|f_j\|_{L^2(T_j,d\pi_j)}^2$, hence $\capp_{\pi}\left(\prod_{j=1}^dE_j\right) \leq \prod_{j=1}^d\capp_{\pi_j}E_j$, and \eqref{e:l1.1} follows immediately. In particular, the product of polar sets is a polar set as well.\\
To get the equality for product sets we turn to the dual definition of capacity: 
\[
\capp_{\pi_j} = \sup\{\mu_j(E_j)^2:\;\supp\mu_j\subset E_j,\; \mathcal{E}_{\pi_j}[\mu_j]\leq 1\}.
\]
Now let $\mu_j$ be some admissible (in the sense above) measure for $E_j$. Define $\mu$ to be the usual extension of $\prod_{j=1}^d\mu_j$ to $\overline{T}^d$. As before, for any $\alpha\in \overline{T}^d$ one has
\begin{equation}\notag
\begin{split}
&\mathbb{V}_{\pi}^{\mu}(\alpha) = \sum_{\beta\geq\alpha}(\mathbb{I}^*\mu)(\beta)\pi(\beta) = \sum_{\beta\geq\alpha}\int_{\mathcal{S}(\beta)}\,d\mu\pi(\beta) = \int_{\overline{T}^d}\sum_{\beta\geq\alpha}\,\pi(\beta)\chi_{\mathcal{S}(\beta)}(\tau)d\mu(\tau) =\\ &\int_{\overline{T}^d}\sum_{\beta\geq\alpha\wedge\tau}\,\pi(\beta)d\mu(\tau) =
 \int_{\overline{T_1}}\dots\int_{\overline{T}_d}\sum_{\beta_1\geq\alpha_1\wedge\tau_1}\pi(\beta_1)\cdot.\,.\,.\,\cdot\sum_{\beta_1\geq\alpha_d\wedge\tau_d}\pi(\beta_d)\,d\mu_1(\tau_1)\dots\,d\mu_d(\tau_2) = \prod_{j=1}^{d}V_{\pi_j}^{\mu_j}(\alpha).
\end{split}
\end{equation}
In particular, we observe that $\mathcal{E}_{\pi}[\mu] = \prod_{j=1}^d\mathcal{E}_{\pi_j}[\mu_j]$, hence $\pi$-energy of $\mu$ is less than $1$. Combined with the fact that $\supp\mu\subset \prod_{j=1}^dE_j$ we obtain $\capp_{\pi}\left(\prod_{j=1}^dE_j\right) \geq \prod_{j=1}^d\capp_{\pi_j}E_j$.\par
\textit{Property \ref{l:A1.s.2}}.\;\; Define, as usual, the restricted measure $\mu\vert_E$ by $\mu\vert_E(F) := \mu(E\bigcap F)$, and let $\mu_E$ be the equilibrium measure of $E$. Clearly, $\mathcal{E}_{\pi}[\mu_E] < \infty$, and $\mathbb{V}_{\pi}^{\mu\vert_E} \leq \mathbb{V}_{\pi}^{\mu}$. We have
\[
\mathcal{E}_{\pi}[\mu\vert_E] = \int_{\overline{T}^d}\mathbb{V}_{\pi}^{\mu\vert_E}\,d\mu\vert_E \leq \int_{\overline{T}^d}\mathbb{V}_{\pi}^{\mu_E}\,d\mu\vert_E = \mathcal{E}_{\pi}[\mu_E,\mu\vert_E],
\]
since $\mathbb{V}_{\pi}^{\mu_E}\geq1$ q.a.e. on $E$. Hence, by positivity of the energy integral,
\[
\mu(E) = \mu\vert_E(E) = \int_{\overline{T}^d}\,d\mu\vert_E \leq \mathcal{E}_{\pi}[\mu_E,\mu\vert_E] \leq \mathcal{E}_{\pi}[\mu_E] = \capp E.
\]
\textit{Property \ref{l:A1.s.2.5}},\;\; Let $\mu_E$ be the equilibrium measure of $E$. Clearly
\[
\mathcal{E}_{\pi}[\mu_E,\mu] = \int_{\overline{T}^d}\mathbb{V}^{\mu_E}_{\pi}\,d\mu \geq \mu(E) \geq \mathcal{E}_{\pi}[\mu].
\]
By positivity of energy integral it follows that $\capp_{\pi}E = \mathcal{E}_{\pi}[\mu_E] \geq \mathcal{E}_{\pi}[\mu_E,\mu]$.\par
\end{proof}
\subsection{Standard polynomial weights and capacity of the boundary}
From now on we are restricting ourselves to a special class of weights --- the so-called standard polynomial weights, where $\pi_j(\beta_j) = 2^{s_jd_{T}(\beta_j)}$ for some $0\leq s_j <1$. This class is connected to the discrete representation of weighted Dirichlet space on the polydisc, i.e. space of analytic functions $f$ on $\mathbb{D}^d$, $f(z)=\sum_{a_1,\dots,a_d\geq0}\hat{f}(a_1,\dots,a_d)z^{a_1}_1\dots z^{a_d}_d$, which satisfy that
\[
\sum_{a_1,\dots,a_d}|\hat{f}(a_1,\dots,a_d)|^2(a_1+1)^{1-s_1}\cdot\dots\cdot(a_d+1)^{1-s_d} < +\infty.
\]
In this case there is a natural way to push down a measure $\mu$ defined on the whole $d$-tree to its distinguished boundary $(\partial T)^d$. \\
To do that we first need to define an analogue of Lebesgue measure on $(\partial T)^d$. We start with a dyadic tree $T$. For any point $\alpha\in T$ we put
\[
M(\partial\mathcal{S}(\alpha)) := 2^{-d_T(\alpha)+1}
\]
to be the 'length' of a 'dyadic interval' on $\partial T$. We see that $M$ can be extended to a Borel measure on $\partial T$ satisfying the property above (also, clearly, it has no mass on singletons). Since $M$ is finite, there exists a unique Borel measure $M_d$ on $(\partial T)^d$ such that
\[
M_d(\mathcal{S}(\alpha)\bigcap(\partial T)^d) = \prod M(\partial\mathcal{S}(\alpha_j))
\]
(observe that $M_d(\{\omega_j\}\times\prod_{j\neq k} T_{k})=0$ for any $1\leq j\leq d$ and $\omega_j\in\partial T_j$). \\
Suppose now $\mu\geq0$ is a Borel measure on $\overline{T}^d$ with finite energy. By the disintegration theorem we can define a measure $\mu_b$ supported on the $(\partial T)^d$ to be
\begin{equation}\label{e:lA.3}
\begin{split}
d\mu_b(\omega_1,\dots,\omega_d) := \sum_{D\subset\{1,\dots,d\}}\sum_{j\in D}\sum_{\beta_j>\omega_j}\frac{d\mu(\tau(D,\beta,\omega))}{\prod_{j\in D}M(\partial\mathcal{S}(\beta_j))}\,\prod_{j\in D}dM(\omega_j),
\end{split}
\end{equation}
where $\tau(D,\beta,\omega)_j = \beta_j$, if $j\in D$, and $\tau(D,\beta,\omega)_j = \omega_j$ otherwise. Roughly speaking, we take the mass $\mu(\beta)$ and distribute it uniformly over $\mathcal{S}(\beta)$, the distinguished boundary part of $\mu$ we leave as it is, and we do a mixed distribution on the rest of $\partial T^d$.

\begin{theorem}\label{t:1}
The potentials of $\mu$ and $\mu_b$ are equivalent,
\begin{equation}\label{e:lA.3s}
\mathbb{V}_{\pi}^{\mu}(\alpha) \approx \mathbb{V}_{\pi}^{\mu_b}(\alpha),\quad \alpha\in \overline{T}^d.
\end{equation}
\end{theorem}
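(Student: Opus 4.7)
The plan is to express both $\mathbb{V}_\pi^\mu(\alpha)$ and $\mathbb{V}_\pi^{\mu_b}(\alpha)$ as integrals of $\mu$ against product kernels, and then compare these kernels pointwise coordinate-by-coordinate. Applying Tonelli to the definition gives
\begin{equation*}
\mathbb{V}_\pi^\mu(\alpha)=\int_{\overline T^d}K(\alpha,\tau)\,d\mu(\tau),\qquad K(\alpha,\tau):=\sum_{\beta\geq\alpha,\,\beta\geq\tau}\pi(\beta),
\end{equation*}
and the product structure of $\pi$ and of the successor sets yields the factorization $K(\alpha,\tau)=\prod_{j=1}^d K_j(\alpha_j,\tau_j)$ with $K_j(\alpha_j,\tau_j)=\sum_{\beta_j\geq\alpha_j\wedge\tau_j}\pi_j(\beta_j)$. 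For the standard polynomial weight each 1D factor is an explicit geometric sum, giving $K_j\approx\pi_j(\alpha_j\wedge\tau_j)$ when $s_j>0$ and $K_j\approx d_T(\alpha_j\wedge\tau_j)$ when $s_j=0$.

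Unraveling the disintegration \eqref{e:lA.3}, the measure $\mu_b$ is exactly the pushforward obtained by spreading each atom $\mu(\tau)$ uniformly over $\mathcal S(\tau)\cap(\partial T)^d$ via the product measure $M_d$: boundary coordinates of $\tau$ are kept as point masses, while interior coordinates are spread along $\partial\mathcal S(\tau_j)$ according to the normalized $M/M(\partial\mathcal S(\tau_j))$. Substituting into $\mathbb{V}_\pi^{\mu_b}$ and applying Tonelli once more produces
\begin{equation*}
\mathbb{V}_\pi^{\mu_b}(\alpha)=\int_{\overline T^d}\widetilde K(\alpha,\tau)\,d\mu(\tau),\qquad \widetilde K(\alpha,\tau)=\prod_{j=1}^d\widetilde K_j(\alpha_j,\tau_j),
\end{equation*}
where for $\tau_j\in T_j$ one has $\widetilde K_j(\alpha_j,\tau_j)=\frac{1}{M(\partial\mathcal S(\tau_j))}\int_{\partial\mathcal S(\tau_j)}K_j(\alpha_j,\omega_j)\,dM(\omega_j)$, and $\widetilde K_j=K_j$ if $\tau_j\in\partial T_j$. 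The theorem then reduces to the one-dimensional pointwise equivalence $K_j\approx\widetilde K_j$ in each coordinate.

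I would argue this by splitting into cases on the relative position of $\alpha_j$ and $\tau_j$. When $\alpha_j\geq\tau_j$, or when $\alpha_j,\tau_j$ are incomparable, every $\omega_j\in\partial\mathcal S(\tau_j)$ satisfies $\alpha_j\wedge\omega_j=\alpha_j\wedge\tau_j$, so $K_j(\alpha_j,\omega_j)\equiv K_j(\alpha_j,\tau_j)$ and $\widetilde K_j=K_j$ identically. The substantive case is $\alpha_j<\tau_j$. Writing the geodesic as $\alpha_j=v_0<v_1<\dots<v_n=\tau_j$, one decomposes $\partial\mathcal S(\tau_j)$ into the central disk $\partial\mathcal S(\alpha_j)$ together with the annuli $A_i:=\partial\mathcal S(v_i)\setminus\partial\mathcal S(v_{i-1})$ for $1\leq i\leq n$; on each $A_i$ one has $\alpha_j\wedge\omega_j=v_i$ and $M(A_i)\approx 2^{-d_T(v_i)}$. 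This reduces the average to the geometric series $\sum_{i=0}^n 2^{(s_j-1)d_T(v_i)}$, which (using $s_j<1$) is controlled by its smallest-exponent term $d_T(v_n)=d_T(\tau_j)$, so that after dividing by $M(\partial\mathcal S(\tau_j))\approx 2^{-d_T(\tau_j)}$ one obtains $\widetilde K_j(\alpha_j,\tau_j)\approx 2^{s_jd_T(\tau_j)}\approx K_j(\alpha_j,\tau_j)$. The degenerate case $s_j=0$ is handled analogously via $\sum_k 2^{-k}(d_T(\tau_j)+k)\approx d_T(\tau_j)$.

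The main obstacle I anticipate is the careful bookkeeping of the mixed-boundary strata appearing in \eqref{e:lA.3}: points $\tau$ with some coordinates in $T_j$ and others already on $\partial T_j$, for which $\mu_b$ spreads only along the interior coordinates while keeping the boundary ones as point masses. The product form of $\widetilde K$ absorbs this cleanly, but verifying compatibility requires a somewhat painstaking unwinding of \eqref{e:lA.3}. Once that is done, the heart of the argument is the elementary one-dimensional geometric sum above, where the summability condition $s_j<1$ is exactly what makes the standard polynomial weight admissible.
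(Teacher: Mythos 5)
Your proposal is correct and takes essentially the same route as the paper's proof: your kernels $K_j$ and $\widetilde K_j$ are the paper's $d_{\pi_j}$ and its boundary average, your annuli $A_i$ coincide with the sets $S_k$ in the proof of Lemma \ref{l:51}, and the geometric series controlled by $s_j<1$ is exactly the estimate \eqref{e:92}. The only difference is presentational: you organize the reduction as a pointwise kernel comparison after Tonelli, whereas the paper isolates the same content as the two-point ``almost martingale'' Lemma \ref{l:51} and then unwinds the strata of \eqref{e:lA.3} term by term.
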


\textit{Proof of Theorem \ref{t:1}}.\\
Given $\alpha,\beta\in \overline{T}^d$ define $d_{\pi}(\alpha\wedge\beta) := \sum_{\gamma\geq\alpha\wedge\beta}\pi(\gamma)$. Since $\mathbb{V}^{\mu_b} = \int_{(\partial T)^d}d_{\pi}(\zeta\wedge\omega)\,d\mu_b$, we want to compare the values of $d_{\pi}(\alpha\wedge\beta)$, and the average of $d_{\pi}$ taken over the boundary projections of $\alpha$ and $\beta$.
\begin{lemma}\label{l:51}
One has
\begin{equation}\label{e:91}
d_{\pi}(\alpha\wedge\beta) \approx \frac{1}{M_d(\partial\mathcal{S}(\alpha))M_d(\partial\mathcal{S}(\beta))}\int_{\partial\mathcal{S}(\alpha)}\int_{\partial\mathcal{S}(\beta)}d_{\pi}(\xi\wedge\omega)dM_d(\xi)\,dM_d(\omega)
\end{equation} 
($d_{\pi}$ is almost a martingale with respect to the measure $M$).
\end{lemma}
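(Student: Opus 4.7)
The plan is to exploit the product structure of $\pi$, $M_d$, and the wedge operation to reduce \eqref{e:91} to a one-variable statement on a single copy of the dyadic tree, and then to verify the latter by expanding the kernel $d_{\pi_j}$ and doing a direct computation using the polynomial weight.

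First I reduce to one dimension. Because $\pi(\gamma)=\prod_j\pi_j(\gamma_j)$ and $\mathcal{P}(\alpha\wedge\beta)=\prod_j\mathcal{P}(\alpha_j\wedge\beta_j)$, the sum defining $d_{\pi}$ splits as a product,
\[
d_{\pi}(\alpha\wedge\beta)=\prod_{j=1}^d d_{\pi_j}(\alpha_j\wedge\beta_j).
\]
Likewise $\partial\mathcal{S}(\alpha)=\prod_j\partial\mathcal{S}(\alpha_j)$, $M_d=\prod_j M$, and $\xi\wedge\omega$ is computed coordinatewise, so Tonelli writes both sides of \eqref{e:91} as products over $j$. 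This reduces matters to the one-variable claim for each weight $\pi_j(\gamma_j)=2^{s_j d_T(\gamma_j)}$ on $T$.

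In one dimension the lower bound is immediate: if $\xi\leq\alpha_j$ and $\omega\leq\beta_j$, then $\alpha_j\wedge\beta_j$ is a common ancestor of $\xi$ and $\omega$, so $\xi\wedge\omega\leq\alpha_j\wedge\beta_j$ and hence $d_{\pi_j}(\xi\wedge\omega)\geq d_{\pi_j}(\alpha_j\wedge\beta_j)$ pointwise. For the upper bound I expand $d_{\pi_j}(\xi\wedge\omega)=\sum_{\gamma}\pi_j(\gamma)\mathbf{1}_{\gamma\geq\xi}\mathbf{1}_{\gamma\geq\omega}$, swap the $\gamma$-sum with the $dM\otimes dM$ integral, and observe that the $\xi$-integral collapses to $M(\partial\mathcal{S}(\min(\alpha_j,\gamma)))$ when $\gamma$ is comparable with $\alpha_j$ and vanishes otherwise; similarly for $\omega$. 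If $\alpha_j,\beta_j$ are incomparable, every contributing $\gamma$ is an ancestor of $\alpha_j\wedge\beta_j$, and the normalized average equals $d_{\pi_j}(\alpha_j\wedge\beta_j)$ exactly. Otherwise I may assume $\alpha_j\leq\beta_j$, so that $\alpha_j\wedge\beta_j=\beta_j$, and split the $\gamma$-sum into (i) ancestors of $\beta_j$, producing $d_{\pi_j}(\beta_j)$ exactly; (ii) the finite chain $\alpha_j\leq\gamma<\beta_j$; and (iii) strict descendants of $\alpha_j$. Pieces (ii) and (iii) become geometric series with ratio $2^{s_j-1}<1$ once I account, in (iii), for the $2^{k-d_T(\alpha_j)}$ descendants of $\alpha_j$ at depth $k$, and each is bounded by $C d_{\pi_j}(\beta_j)$.

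The only real technical point is piece (iii): the combinatorial factor $2^{k-d_T(\alpha_j)}$ beats the weight decay $\pi_j(\gamma)M(\partial\mathcal{S}(\gamma))^2\sim 2^{(s_j-2)k}$ exactly because $s_j<1$, and the residual prefactor $2^{d_T(\alpha_j)+d_T(\beta_j)}$ coming from the normalization $[M(\partial\mathcal{S}(\alpha_j))M(\partial\mathcal{S}(\beta_j))]^{-1}$ has to be absorbed using $d_T(\alpha_j)\geq d_T(\beta_j)$ to yield a bound of order $2^{s_jd_T(\beta_j)}\approx d_{\pi_j}(\beta_j)$ (and $\leq 1\leq d_T(\beta_j)=d_{\pi_j}(\beta_j)$ when $s_j=0$). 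This trade between the depth of $\alpha_j$ and the exponent $s_j-1$ is precisely where the polynomial nature of the weight, and the restriction $s_j<1$, are used, and it is the main obstacle.
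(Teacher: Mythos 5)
Your argument is correct, and it reaches \eqref{e:91} by a route that differs from the paper's in the way the upper bound is organized. The paper first isolates a one-sided, single-average estimate (inequality \eqref{e:92}: averaging $d_{\pi_1}(\tau_1\wedge\cdot)$ over $\partial\mathcal{S}(\beta_1)$ loses at most a constant), proves it by partitioning $\partial\mathcal{S}(\beta_1)$ into the annuli $S_k=\partial\mathcal{S}(\gamma_k)\setminus\partial\mathcal{S}(\gamma_{k+1})$ along the chain from $\beta_1$ down to $\tau_1$, on each of which $d_{\pi_1}(\tau_1\wedge\omega_1)$ is essentially constant, and then applies this estimate twice to handle the double average. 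You instead expand the kernel as $d_{\pi_j}(\xi\wedge\omega)=\sum_\gamma\pi_j(\gamma)\chi_{\partial\mathcal{S}(\gamma)}(\xi)\chi_{\partial\mathcal{S}(\gamma)}(\omega)$, apply Tonelli, and compute both boundary integrals at once, reducing the whole matter to a case analysis on the position of $\gamma$ relative to $\alpha_j$ and $\beta_j$. The two computations are dual to one another (your sum over descendants $\gamma$ of $\alpha_j$ at depth $k$, with the counting factor $2^{k-d_T(\alpha_j)}$, plays exactly the role of the paper's $M(S_k)$), and both ultimately rest on the same geometric series with ratio $2^{s_j-1}<1$, which is where $s_j<1$ enters in either version. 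What your approach buys is that the ancestor part comes out as an exact identity --- in particular you get genuine equality when $\alpha_j$ and $\beta_j$ are incomparable, and you avoid squaring the constant by iterating a one-sided bound; what the paper's approach buys is the reusable intermediate estimate \eqref{e:92} and the explicit ``almost martingale'' interpretation, at the cost of the constant $100/(1-s_1)^2$. One small point of care common to both: the averages in \eqref{e:91} only make literal sense when the coordinates of $\alpha$ and $\beta$ are interior vertices (otherwise $M(\partial\mathcal{S}(\cdot))=0$), which is the only case needed in the application, and your case analysis implicitly assumes this; it would be worth saying so explicitly.
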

\begin{proof}
Due to multiplicativity it is enough to prove that, say,
\begin{equation}\notag
d_{\pi_1}(\alpha_1\wedge\beta_1) \approx \frac{1}{M(\partial\mathcal{S}(\alpha_1))M(\partial\mathcal{S}(\beta_1))}\int_{\partial\mathcal{S}(\alpha_1)}\int_{\partial\mathcal{S}(\beta_1)}d_{\pi_1}(\xi_1\wedge\omega1)dM(\xi_1)\,dM(\omega_1).
\end{equation}
If $\xi_1\leq\alpha_1$ and $\omega_1\leq\beta_1$, then $d_{\pi_1}(\xi_1\wedge\omega_1) \geq d_{\pi_1}(\alpha_1\wedge\beta_1)$, hence
\[
d_{\pi_1}(\alpha_1\wedge\beta_1) \leq \frac{1}{M(\partial\mathcal{S}(\alpha_1))M(\partial\mathcal{S}(\beta_1))}\int_{\partial\mathcal{S}(\alpha_1)}\int_{\partial\mathcal{S}(\beta_1)}d_{\pi_1}(\xi_1\wedge\omega_1)dM(\xi_1)\,dM(\omega_1).
\]
To get the reverse inequality we first show that for any $\beta_1\in T_1$ and $\tau_1\in \overline{T}_1$ we have
\begin{equation}\label{e:92}
\frac{1}{M(\partial\mathcal{S}(\beta_1))}\int_{\partial\mathcal{S}(\beta_1)}d_{\pi_1}(\tau_1\wedge\omega_1)\,dM(\omega_1) \lesssim d_{\pi_1}(\tau_1\wedge\beta_1).
\end{equation}
If $\tau_1\geq\beta_1$ or these two points are not comparable, then, clearly, $d_{\pi_1}(\tau_1\wedge\beta_1) = d_{\pi_1}(\tau_1\wedge\omega_1)$ for $\omega_1\leq\beta_1$, and \eqref{e:92} is trivial. Hence from  now on we assume that $\tau_1 < \beta_1$. First we note that since $\pi$ is a standard polynomial weight, one has $d_{\pi_1}(\gamma) \approx d_T(\gamma)$, if $s_1d_T(\gamma)\leq1$, and $d_{\pi_1}(\gamma) \approx \frac{1}{s_1}2^{s_1d_T(\gamma)}$, if $s_1d_T(\gamma)\geq1$, for any $\gamma\in \overline{T}_1$. Let $n:= d_T(\beta_1)$ and $N:= d_T(\tau_1)$. For every $n\leq k \leq N$ there exists exactly one point $\gamma_k\in T_1$ such that $\tau_1 \leq\gamma_k \leq\beta_1$, and $d_T(\gamma_k) = k$ (in particular $\gamma_1 = \beta_1,\; \gamma_N = \tau_1$). Define 
\[
S_k = \partial\mathcal{S}(\gamma_k)\setminus\partial\mathcal{S}(\gamma_{k+1}),\quad n\leq k \leq N-1,
\]
and
\[
S_N = \partial\mathcal{S}(\tau_1).
\]
If $\omega_1\in S_k$, then, clearly, $d_{\pi_1}(\tau_1\wedge\omega_1) \approx \frac{1}{s_1} 2^{s_1k}$ for $k\geq\frac{1}{s_1}$, and $d_{\pi_1}(\tau_1\wedge\omega_1) \approx k$ otherwise. Moreover, these sets are disjoint and form a covering of $\partial\mathcal{S}(\beta_1)$. Also $M(S_k) = 2^{-k}- 2^{-k-1}, n\leq k \leq N-1$ and $M(S_N) = 2^{-N}$. We have
\begin{equation}\notag
\begin{split}
&\frac{1}{M(\partial\mathcal{S}(\beta_1))}\int_{\partial\mathcal{S}(\beta_1)}d_{\pi_1}(\tau_1\wedge\omega_1)\,dM(\omega_x) =\\
& 2^{d_T(\beta_1)}\sum_{k=n}^N\int_{S_k}d_{\pi_1}(\tau_1\wedge\omega_1)\,dM(\omega_1) \approx
2^n\sum_{k=n}^{\left[\frac{1}{s_1}\right]}k\cdot M(S_k)+ \frac{1}{s_1}2^n\sum_{k=\max\left(n,\left[\frac{1}{s_1}\right]\right)}^N2^{s_1k}\cdot M(S_k) \leq\\
& 2^n\sum_{k=n}^{\left[\frac{1}{s_1}\right]}k2^{-k} + \frac{1}{s_1}2^n\sum_{k=\max\left(n,\left[\frac{1}{s_1}\right]\right)}^N2^{-k(1-s_1)}\leq \frac{10}{1-s_1}d_{\pi_1}(\tau_1\wedge\beta_1),
\end{split}
\end{equation}
and we arrive at \eqref{e:92}. It follows immediately that
\begin{equation}\notag
\begin{split}
&\frac{1}{M(\partial\mathcal{S}(\alpha_1))M(\partial\mathcal{S}(\beta_1))}\int_{\partial\mathcal{S}(\alpha_1)}\int_{\partial\mathcal{S}(\beta_1)}d_{\pi_1}(\xi_1\wedge\omega_1)\,dM(\xi_1)\,dM(\omega_1) \leq\\
&\frac{10}{1-s_1}\frac{1}{M(\partial\mathcal{S}(\alpha_1))}\int_{\partial\mathcal{S}(\alpha_1)}d_{\pi_1}(\xi_1\wedge\beta_1)M(\xi_1) \leq \frac{100}{(1-s_1)^2} d_{\pi_1}(\alpha_1\wedge\beta_1).
\end{split}
\end{equation}
\end{proof}

We proceed with the proof of Theorem \ref{t:1}.
Fix any point $\alpha\in \overline{T}^d$. We have
\begin{equation}\notag
\begin{split}
&\mathbb{V}^{\mu_b}(\alpha) = \int_{(\partial T)^d}\prod_{j=1}^dd_{\pi_j}(\alpha_j\wedge\omega_j)\,d\mu_b(\omega_1,\dots,\omega_d).
\end{split}
\end{equation}
Consider the first term (the one corresponding to values of $\mu$ on $T^d$) in the expression for $d\mu_b$.
By Tonelli's theorem and Lemma \ref{l:51} one has
\begin{equation}\notag
\begin{split}
&\int_{(\partial T)^d}\prod_{j=1}^dd_{\pi_j}(\alpha_j\wedge\omega_j)\,\sum_{\beta\geq\omega}\frac{\mu(\beta)}{\prod_{j=1}^dM(\mathcal{S}(\beta_j))}\,dM(\omega_1)\dots\,dM(\omega_d) = \\
&\sum_{\beta\in T^d}\mu(\beta)\cdot\prod_{j=1}^d\left(\frac{1}{M(\mathcal{S}(\beta_j))}\int_{\mathcal{S}(\beta_j)}d_{\pi_j}(\alpha_j\wedge\omega_j)\,\,dM(\omega_j)\right) \approx\\
&\sum_{\beta\in T^d}\mu(\beta)d_{\pi}(\alpha\wedge\beta).
\end{split}
\end{equation}
Similarly, if we take one of the mixed terms in \eqref{e:lA.3}, say with $D = \{2,3,\dots,d\}$, we obtain
\begin{equation}\notag
\begin{split}
&\int_{(\partial T)^d}\prod_{j=1}^dd_{\pi_j}(\alpha_j\wedge\omega_j)\,\sum_{j=2}^d\sum_{\beta_j\geq\omega_j}\frac{d\mu(\omega_1,\beta_2,\dots,\beta_d)}{\prod_{j=2}^dM(\mathcal{S}(\beta_j))}\,dM(\omega_2)\dots\,dM(\omega_d) =\\
&\sum_{j=2}^d\sum_{\beta_j\in T_j}\int_{\partial T_1}d_{\pi_1}(\alpha_1\wedge\omega_1)d\mu(\omega_1,\beta_2,\dots,\beta_d) \cdot\prod_{j=2}^d\left(\frac{1}{M(\mathcal{S}(\beta_j))}\int_{\mathcal{S}(\beta_j)}d_{\pi_j}(\alpha_j\wedge\omega_j)\,dM(\omega_j)\right)\approx\\
&\sum_{j=2}^d\sum_{\beta_j\in T_j}\int_{\partial T_1}d_{\pi_1}(\alpha_1\wedge\omega_1)\prod_{j=2}^dd_{\pi_j}(\alpha_j\wedge\beta_j)d\mu(\omega_1,\beta_2,\dots,\beta_d) = \int_{\partial T_1\times T_2\times\dots\times T_d}d_{\pi}(\alpha\wedge\tau)\,d\mu(\tau).
\end{split}
\end{equation}
The rest of the terms are done in the same way.

We arrive at
\begin{equation}\notag
\begin{split}
&V^{\mu_b}_{\pi}(\alpha) =\\
& \int_{(\partial T)^d}\sum_{D\subset\{1,\dots,d\}}\sum_{j\in D}\sum_{\beta_j>\omega_j}\prod_{j\in D}d_{\pi_j}(\alpha_j\wedge\beta_j)\prod_{j\in \{1,\dots,d\}\setminus D}d_{\pi_j}(\alpha_j\wedge\omega_j)\frac{d\mu(\tau(D,\beta,\omega))}{\prod_{j\in D}M(\partial\mathcal{S}(\beta_j))}\,\prod_{j\in D}dM(\omega_j) \approx\\
& \sum_{D\subset\{1,\dots,d\}}\int_{\prod_{j\in D} T_j\times\prod_{j\in \{1,\dots,d\}\setminus D}}d_{\pi}(\alpha\wedge\tau)\,d\mu(\tau) = \int_{\overline{T}^d}d_{\pi}(\alpha\wedge\tau)\,d\mu(\tau) = \mathbb{V}_{\pi}^{\mu}(\alpha),
\end{split}
\end{equation}
here the Cartesian product is taken according to the order of indices.
$\blacksquare$

\begin{corollary}\label{c:1}
Given a compact set $E\subset \overline{T}^d$ define its boundary projection $\mathcal{S}_b(E)\subset(\partial T)^d$ to be
\[
\mathcal{S}_b(E) = \bigcup_{\beta\in E}\partial\mathcal{S}(\beta).
\]
Then there exists a constant $C>1$ depending only on $d$ and $\pi$ such that
\begin{equation}\label{e:cA.3}
\capp_{\pi} \mathcal{S}_b(E) \leq\capp_{\pi} E \leq C\capp_{\pi} \mathcal{S}_b(E).
\end{equation}
\end{corollary}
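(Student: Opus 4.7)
The corollary splits into two inequalities of quite different character, and I would treat them separately.

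The first inequality $\capp_\pi \mathcal{S}_b(E) \leq \capp_\pi E$ is essentially immediate from the definition \eqref{e:334}. If $f\geq 0$ is admissible for $E$, then for any $\omega\in\mathcal{S}_b(E)$ there exists $\beta\in E$ with $\omega\in\partial\mathcal{S}(\beta)$, hence $\omega\leq\beta$ and $\mathcal{P}(\omega)\supset\mathcal{P}(\beta)$. Consequently $\mathbb{I}f(\omega)\geq \mathbb{I}f(\beta)\geq 1$, so $f$ is admissible for $\mathcal{S}_b(E)$ as well, and taking the infimum of $\int f^2\,d\pi$ yields the claim.

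For the harder direction $\capp_\pi E \leq C\capp_\pi \mathcal{S}_b(E)$, I propose to argue via Property \ref{l:A1.s.2.5} of Lemma \ref{l:1} applied to the boundary projection of the equilibrium measure of $E$. Let $\mu_E$ be the equilibrium measure of the compact set $E$; then $\supp\mu_E\subset E$ and $\capp_\pi E = \mu_E(E)=\mu_E(\overline{T}^d)=\mathcal{E}_\pi[\mu_E]$. Form $(\mu_E)_b$ via \eqref{e:lA.3}. Inspection of the formula shows that this operation is mass-preserving, and that each unit of mass sitting at $\tau\in\overline T^d$ is redistributed on a subset of $\partial\mathcal{S}(\tau)$; in particular $\supp(\mu_E)_b\subset \bigcup_{\beta\in\supp\mu_E}\partial\mathcal{S}(\beta)\subset \mathcal{S}_b(E)$, so $(\mu_E)_b(\mathcal{S}_b(E))=\mu_E(\overline T^d)=\capp_\pi E$.

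The main step is now to estimate $\mathcal{E}_\pi[(\mu_E)_b]$. By Theorem \ref{t:1} one has $\mathbb{V}^{(\mu_E)_b}_\pi(\alpha)\approx \mathbb{V}^{\mu_E}_\pi(\alpha)$ pointwise on $\overline T^d$, and applying this equivalence twice together with the symmetry of the mutual energy in \eqref{e:337} yields
\[
\mathcal{E}_\pi[(\mu_E)_b]=\int \mathbb{V}^{(\mu_E)_b}_\pi\,d(\mu_E)_b \lesssim \int \mathbb{V}^{\mu_E}_\pi\,d(\mu_E)_b = \int \mathbb{V}^{(\mu_E)_b}_\pi\,d\mu_E \lesssim \int \mathbb{V}^{\mu_E}_\pi\,d\mu_E = \capp_\pi E.
\]
Write $K\geq 1$ for the implicit constant, so $\mathcal{E}_\pi[(\mu_E)_b]\leq K\capp_\pi E$, and set $\mu:=(1/K)(\mu_E)_b$. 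Then $\mu(\mathcal{S}_b(E))=\capp_\pi E/K$ and $\mathcal{E}_\pi[\mu]=K^{-2}\mathcal{E}_\pi[(\mu_E)_b]\leq \capp_\pi E/K=\mu(\mathcal{S}_b(E))$, so Property \ref{l:A1.s.2.5} delivers $\capp_\pi\mathcal{S}_b(E)\geq \mu(\mathcal{S}_b(E))=\capp_\pi E/K$, giving the desired bound with $C=K$.

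The substantive analytical work is already packaged inside Theorem \ref{t:1}; once that is in hand, the principal obstacle is simply to verify mass-preservation and the support inclusion for the projection $\mu\mapsto\mu_b$, both of which follow by direct inspection of \eqref{e:lA.3} after using the compactness of $E$ to obtain $\supp\mu_E\subset E$.
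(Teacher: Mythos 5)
Your proof is correct and follows essentially the same route as the paper: both arguments push the equilibrium measure $\mu_E$ of $E$ down to the distinguished boundary via \eqref{e:lA.3}, note that this preserves mass and lands the support inside $\mathcal{S}_b(E)$, and use Theorem \ref{t:1} to show that the resulting measure has energy comparable to $\capp_{\pi}E$, so that its mass $|(\mu_E)_b|=|\mu_E|=\capp_{\pi}E$ is controlled by $\capp_{\pi}\mathcal{S}_b(E)$. The only divergence is in the final step, where the paper runs an explicit quadratic-form (positivity of the energy) argument against the equilibrium measure of $\mathcal{S}_b(E)$, while you normalize $(\mu_E)_b$ by the implicit constant and invoke Property \ref{l:A1.s.2.5} of Lemma \ref{l:1} --- a repackaging of the same positivity argument rather than a genuinely different idea.
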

\begin{proof}
The left inequality is trivial, since any function admissible for $E$ is also admissible for $\mathcal{S}_b(E)$. Now let $\mu$ and $\nu$ be the equilibrium measures for $E$ and $\mathcal{S}_b(E)$ respectively. By definition of $\mu_b$
\begin{equation}\notag
\begin{split}
&|\mu_b| := \int_{(\partial T)^d}\,d\mu_b = 
\int_{(\partial T)^d}\sum_{D\subset\{1,\dots,d\}}\sum_{j\in D}\sum_{\beta_j>\omega_j}\frac{d\mu(\tau(D,\beta,\omega))}{\prod_{j\in D}M(\partial\mathcal{S}(\beta_j))}\,\prod_{j\in D}dM(\omega_j)=\\
&\sum_{D\subset\{1,\dots,d\}}\sum_{j\in D}\sum_{\beta_j\in T_j}\int_{\prod_{j\in D} \mathcal{S}(\beta_j)}\int_{\prod_{k\in \{1,\dots,d\}\setminus D}\partial T_k}\frac{d\mu(\tau(D,\beta,\omega))}{\prod_{j\in D}M(\partial\mathcal{S}(\beta_j))}\,\prod_{j\in D}dM(\omega_j) =\\
&\sum_{D\subset\{1,\dots,d\}}\int_{\prod_{j\in D} T_j\times\prod_{j\in \{1,\dots,d\}\setminus D}}\,d\mu(\tau) = \int_{\overline{T}^d}d_{\pi}\,d\mu(\tau) =: |\mu|.
\end{split}
\end{equation}
By Theorem \ref{t:1} and equilibrium property
\[
|\mu_b| = |\mu| = \int_{\overline{T}^d}\mathbb{V}_{\pi}^{\mu}\,d\mu \approx \int_{\overline{T}^d}\mathbb{V}_{\pi}^{\mu_b}\,d\mu \approx \int_{\overline{T}^d}\mathbb{V}_{\pi}^{\mu_b}\,d\mu_b.
\]
On the other hand, for every $C\in\mathbb{R}$ we have
\begin{equation}\notag
0 \leq \int_{\overline{T}^d}\mathbb{V}_{\pi}^{\mu_b}\,d\mu_b -2C\int_{\overline{T}^d}\mathbb{V}_{\pi}^{\nu}\,d\mu_b + C^2\int_{\overline{T}^d}\mathbb{V}_{\pi}^{\nu}\,d\nu \leq \int_{\overline{T}^d}\mathbb{V}_{\pi}^{\mu_b}\,d\mu_b -2C|\mu_b| + C^2|\nu|,
\end{equation}
since $\nu$ is equilibrium for $\mathcal{S}_b(E)$ and $\mathbb{V}^{\nu} \geq 1$ q.a.e. 
on $\mathcal{S}_b(E)\supset\supp\mu_b$. Hence, if we take $C$ to be large enough, we obtain
\begin{equation}\notag
0 \leq \int_{\overline{T}^d}\mathbb{V}_{\pi}^{\mu_b}\,d\mu_b -C|\mu_b| + C\left(C|\nu|-|\mu_b|\right)\leq C\left(C|\nu|-|\mu_b|\right).
\end{equation}
Therefore
\[
C\capp \mathcal{S}_b(E) = C|\nu| \geq |\mu_b| = |\mu| = \capp E,
\]
and we get the second half of \eqref{e:cA.3}.
\end{proof}

Note that the condition $s_j<1$ imposed on the standard polynomial weights is essential. Indeed, in the proof of Lemma \ref{l:51} one can see, that if $s_j\geq 1$ for some $j$, then the capacity of $\partial T_j$ (and hence of $(\partial T)^d$) becomes zero. In this case we basically leave the domain of weighted graph Dirichlet spaces and move to Hardy spaces, for which the capacity is a much less convenient instrument. Also, since $\pi$ is uniform, the equilibrium measure of the distinguished boundary $(\partial T)^d$ is actually $C M_d$ with $C = C(d,\pi)$.\\

\section{Hardy inequality on $d$-tree and properties of trace measures}\label{oggi}
Assume $\mu \geq 0$ is a Borel measure on the $\overline{T}^d$, and $f\geq0$ is a function on $T^d$. The multilinear weighted Hardy inequality is
\begin{equation}\label{e:53}
\int_{\overline{T}^d}(\mathbb{I}f)^2\,d\mu \leq C\sum_{\alpha\in T^d}f^2(\alpha)\pi(\alpha) = \|f\|^2_{L^2(T^d,\,d\pi)},
\end{equation}
for some constant $C>0$. A measure $\mu$ is called a \textit{trace measure for Hardy inequality}, if \eqref{e:53} holds for any $f\geq0$ with constant $C = C_{\mu}$ depending only on $\mu$ (and of course on the weight $\pi$ and dimension $d$). There is a vast amount of literature on various types of trace inequalities of the form above (see e.g. \cite{arsw2014}, \cite{ks1988}, \cite{ks1986} and references therein). Here we mostly aim to concentrate on this particular discrete version and investigate the relationship between different necessary and sufficient conditions.\\
Inequality \eqref{e:53} means that the operator $\mathbb{I}$ is bounded when acting from $L^2(T^d,\,d\pi)$ to $L^2(\overline{T}^d,\,d\mu)$. Equivalently, the adjoint operator, which we denote by $\mathbb{I}^*_{\mu}$, is bounded;
\begin{equation}\label{e:54}
\|\mathbb{I}^*_{\mu}g\|^2_{L^2(T^d,\,d\pi)} \leq C\|g\|^2_{L^2(\overline{T}^d,\,d\mu)},
\end{equation}
for any $\mu$-measurable $g\geq0$ on $\overline{T}^d$. Since
\begin{equation}\notag
\begin{split}
&\langle\varphi, \mathbb{I}f\rangle_{L^2(\overline{T}^d,\,d\mu)} = \int_{\overline{T}^d}\varphi(\alpha)\sum_{\beta\geq\alpha}f(\beta)\pi(\beta)\,d\mu(\alpha)= \int_{\overline{T}^d}\varphi(\alpha)\sum_{\beta\in T^d}\chi_{\mathcal{S}(\beta)}(\alpha)f(\beta)\pi(\beta)\,d\mu(\alpha) =\\
&\sum_{\beta\in T^d}\int_{\mathcal{S}(\beta)}\varphi(\alpha)\,d\mu(\alpha)f(\beta)\pi(\beta),
\end{split}
\end{equation}
we clearly have 
\[
\mathbb{I}^*_{\mu}g(\alpha) = \int_{\mathcal{S}(\alpha)}g\,d\mu.
\]
Another reason to consider this inequality is to study the connection between Hardy inequality on $d$-tree and Carleson embedding for weighted Dirichlet-type spaces on the polydisc, which has been well established in \cite{ars2008}, \cite{arsw2014} for $d=1$, and, recently, in \cite{amps2018} for $d=2$ and $\pi\equiv 1$ (unweighted case).\\
We start with the dual inequality \eqref{e:54}. Let $\mu\geq0$ be a Borel measure on $d$-tree with finite energy, and assume for simplicity that $\supp\mu\subset(\partial T)^d$ (one can pass to general case by careful application of Theorem \ref{t:1} above). A set $E\subset (\partial T)^d$ is called rectangular, if $E$ is a union of finite collection of 'dyadic rectangles' on $(\partial T)^d$, in other words there exists a collection of points $\{\alpha^j\}_{j=1}^N$ such that
\[
E = \bigcup_{j=1}^N\partial\mathcal{S}(\alpha^j).
\]
Now fix such a set $E$ and let $g := \chi_{E}$, plugging $g$ into \eqref{e:54} we obtain
\begin{equation}\notag
\int_{\overline{T}^d}\chi_{E}\,d\mu = \mu(E) \gtrsim C_{\mu}\|\mathbb{I}^*_{\mu}\chi_E\|^2_{L^2(T^d,\,d\pi)} = C_{\mu}\sum_{\alpha\in T^d}\left(\int_{\mathcal{S}_{\alpha}}\chi_E\,d\mu\right)^2\pi(\alpha) = C_{\mu}\sum_{\alpha\in T^d} \left(\mu\left(\partial\mathcal{S}(\alpha)\bigcap E\right)\right)^2\pi(\alpha).
\end{equation}
Using the dyadic rectangle interpretation from the end of Section \ref{SS:2.35} we can rewrite this inequality as
\begin{equation}\label{e:55}
\mu(E) \gtrsim C_{\mu}\sum_{Q}\mu\left(Q\bigcap E\right)^2\pi(Q),\quad \textup{for any rectangular}\; E,
\end{equation}
where $Q = Q_{\alpha} = \partial \mathcal{S}(\alpha)$ is the uniquely defined 'dyadic rectangle' representing a point $\alpha\in T^d$, and $\pi(Q_{\alpha}):= \pi(\alpha)$. Clearly, the expression on the right-hand side of  \eqref{e:55} is just $\pi$-energy of $\mu$ restricted on the set $E$, we call this inequality \textit{global charge-energy} condition. Moreover, if we only consider those rectangles $Q$ that are inside $E$, we obtain
\begin{equation}\label{e:56}
\mu(E) \gtrsim C_{\mu}\sum_{Q\subset E}\mu(Q)^2\pi(Q),\quad \textup{for any rectangular}\; E,
\end{equation}
this one is called \textit{local charge-energy} condition (the reasoning being that the right hand side can be viewed as a 'local' $\pi$-energy of $\mu$ on the set $E$).\\
One of the questions we are interested in is whether one of these necessary conditions is also sufficient for \eqref{e:54}. We start with the global charge-energy condition. By Property \ref{l:A1.s.2.5} from Lemma \ref{l:1} one has
\[
\capp_{\pi}E \gtrsim C_{\mu}\mu(E),\quad \textup{for any rectangular}\; E,
\]
that is, $\mu$ is a \textit{subcapacitary} measure. In \cite{amps2018} it was shown (for $d=2$ and unweighted case) that subcapacitary property indeed implies the trace condition \eqref{e:53}. Note that the subcapacitary condition should hold for any rectangular set $E$; if $\mu(E) \leq \capp_{\pi}E$ only for some particular set $E$, it does not necessarily imply \eqref{e:55} for that set.\\
Consider now the weaker local charge-energy condition \eqref{e:56}. In \cite{ahmv2018} it was shown that it still is equivalent to the trace inequality, for $d=2$ and $\pi\equiv1$. For $d=1$ and general $\pi$, a proof can be found in \cite{arsw2014}; see also \cite{ahmv2018} for Bellman function approach. Here we want to present a slightly different approach, based on the maximal function inequality.
\begin{theorem}\label{p:1}
Assume that Borel measure $\mu\geq0,\; \supp\mu\subset (\partial T)^d,$ with finite energy satisfies \eqref{e:56}. Then the trace inequality \eqref{e:54} follows, if the maximal function inequality
\begin{equation}\label{e:57}
\int_{(\partial T)^d}g^2\,d\mu \gtrsim C_{\mu}\int_{(\partial T)^d}(\mathcal{M}_{\mu}g)^2\,d\mu
\end{equation}
holds for any $g\in L^2(\overline{T}^d,\,d\mu)$, where
\[
(\mathcal{M}_{\mu}g)(\beta) = \sup_{\beta \leq\alpha\in T^d}\frac{\int_{Q_{\alpha}}g\,d\mu}{\mu(Q_{\alpha})},\quad \beta\in \overline{T}^d.
\]
\end{theorem}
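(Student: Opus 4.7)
The plan is a layer-cake argument that converts the local charge-energy information into an $L^2$ bound for the dyadic averaging operator $A_\alpha g := \mu(Q_\alpha)^{-1}\int_{Q_\alpha}g\,d\mu$ (with the convention $0/0=0$), and then closes with the assumed maximal inequality \eqref{e:57}. Since $\supp\mu\subset(\partial T)^d$ and $\mathcal{S}(\alpha)\cap(\partial T)^d=Q_\alpha$, the adjoint factors as $(\mathbb{I}^*_\mu g)(\alpha)=\int_{Q_\alpha}g\,d\mu=\mu(Q_\alpha)\,A_\alpha g$, so
\begin{equation}\notag
\|\mathbb{I}^*_\mu g\|^2_{L^2(T^d,\,d\pi)}=\sum_{\alpha\in T^d}\mu(Q_\alpha)^2(A_\alpha g)^2\pi(\alpha).
\end{equation}

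Next I would apply the scalar layer cake $(A_\alpha g)^2=2\int_0^\infty t\,\chi_{\{A_\alpha g>t\}}\,dt$ together with Tonelli to rewrite
\begin{equation}\notag
\|\mathbb{I}^*_\mu g\|^2_{L^2(T^d,\,d\pi)}=2\int_0^\infty t\sum_{\alpha\in E_t}\mu(Q_\alpha)^2\pi(\alpha)\,dt,\qquad E_t:=\{\alpha\in T^d:A_\alpha g>t\}.
\end{equation}
Setting $F_t:=\bigcup_{\alpha\in E_t}Q_\alpha\subset(\partial T)^d$, the very definition of $\mathcal{M}_\mu$ gives $F_t\subset\{\mathcal{M}_\mu g>t\}$. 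Because \eqref{e:56} is stated only for \emph{finite} unions of dyadic rectangles, I would exhaust $E_t$ by an increasing sequence of finite subsets $E_t^N\uparrow E_t$, form the rectangular set $F_t^N:=\bigcup_{\alpha\in E_t^N}Q_\alpha$, and apply the local charge-energy condition to obtain
\begin{equation}\notag
\sum_{\alpha\in E_t^N}\mu(Q_\alpha)^2\pi(\alpha)\;\leq\;\sum_{Q_\alpha\subset F_t^N}\mu(Q_\alpha)^2\pi(Q_\alpha)\;\lesssim\;\mu(F_t^N)\;\leq\;\mu(\{\mathcal{M}_\mu g>t\}).
\end{equation}
Monotone convergence in $N$ transfers the bound to $E_t$ itself.

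Plugging this back into the layer-cake identity yields
\begin{equation}\notag
\|\mathbb{I}^*_\mu g\|^2_{L^2(T^d,\,d\pi)}\;\lesssim\;2\int_0^\infty t\,\mu(\{\mathcal{M}_\mu g>t\})\,dt\;=\;\|\mathcal{M}_\mu g\|^2_{L^2(\overline{T}^d,\,d\mu)},
\end{equation}
and the hypothesis \eqref{e:57} controls the right-hand side by $\|g\|^2_{L^2(\overline{T}^d,\,d\mu)}$, which is exactly \eqref{e:54}.

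The main obstacle is the mismatch between the combinatorial setup of \eqref{e:56} (finite unions of rectangles only) and the analytic argument, where the relevant level sets $E_t$ are typically countably infinite; this is handled by the truncation plus monotone-convergence step above, which works precisely because the majorant $\mu(\{\mathcal{M}_\mu g>t\})$ is independent of the truncation. Everything else is routine: the scalar layer cake, Tonelli on non-negative integrands, and the standard ``bad-set'' reduction to maximal-function control familiar from the weighted Carleson embedding theorems referenced after \eqref{e:56}.
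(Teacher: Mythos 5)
Your proof is correct and takes essentially the same route as the paper's: both decompose $\|\mathbb{I}^*_{\mu}g\|^2_{L^2(T^d,\,d\pi)}$ over superlevel sets of the maximal function, apply the local charge-energy condition \eqref{e:56} to finite rectangular approximations of those level sets (noting they are unions of dyadic rectangles), and close with the assumed maximal inequality \eqref{e:57}. The only difference is cosmetic --- you use a continuous layer-cake where the paper uses a dyadic one, and you run the chain of inequalities in the opposite direction.
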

\begin{proof}
Fix a function $g\in L^2(\overline{T}^d,\,d\mu)$, some $k\in\mathbb{Z}$ and consider the set $E_{k}:= \{\omega\in (\partial T)^d:\; (\mathcal{M}_{\mu}g)(\omega)>2^k\}$. Clearly there exists a sequence $\{\alpha^j_{k}\}_{j=1}^{\infty}$ such that $E_{k} = \bigcup_{j=1}^{\infty}\partial\mathcal{S}(\alpha^j_{k})$. Approximating $E_{k}$ by rectangular sets $E^{n}_{k} = \bigcup_{j=1}^{n}\partial\mathcal{S}(\alpha^j_{k})$ we see that local charge-energy condition implies
\[
\mu (E_k) \gtrsim C_{\mu}\sum_{Q\subset E_k}\mu(Q)^2\pi(Q),\quad k\in\mathbb{Z}.
\]
By distribution function argument and maximal function inequality \eqref{e:57} we have
\begin{equation}\notag
\begin{split}
&C_{\mu}^{-1}\int_{(\partial T)^d}g^2\,d\mu \gtrsim \int_{(\partial T)^d}(\mathcal{M}_{\mu}g)^2\,d\mu \approx \sum_{k\in\mathbb{Z}}2^{2k}\mu(E_k) \gtrsim \sum_{k\in\mathbb{Z}}2^{2k}\sum_{Q\subset E_k}\mu(Q)^2\pi(Q) \gtrsim\\
&\sum_{\beta\in T^2}(\mathcal{M}_{\mu}g)^2(\beta)\mu^2(Q_{\beta})\pi(Q_{\beta}),
\end{split}
\end{equation}
since $\bigcup_{k\in\mathbb{Z}} E_k = (\partial T)^d$.
On the other hand,
\begin{equation}\notag
\begin{split}
&\sum_{\beta\in T^2}(\mathcal{M}_{\mu}g)^2(\beta)\mu^2(Q_{\beta})\pi(Q_{\beta}) = \sum_{\beta\in T^2}\left(\sup_{\beta \leq\alpha}\frac{\int_{Q_{\alpha}}g\,d\mu}{\mu(Q_{\alpha})}\right)^2\mu^2(Q_{\beta})\pi(Q_{\beta}) \geq \sum_{\beta\in T^2}\left(\frac{\int_{Q_{\beta}}g\,d\mu}{\mu(Q_{\beta})}\right)^2\mu^2(Q_{\beta})\pi(Q_{\beta})\geq\\
& \sum_{\beta\in T^2}\left(\int_{Q_{\beta}}g\,d\mu\right)^2\pi(Q_{\beta}) = \|\mathbb{I}^*_{\mu}g\|^2_{L^2(T^d,\,d\pi)}.
\end{split}
\end{equation}
We are done.
\end{proof}

For $d=1$ this proposition solves the problem, since the maximal function operator is obviously bounded. In higher dimensions \eqref{e:57} fails for some measures; due to presence of cycles in $T^d$, several rectangles can have non-trivial intersection. However the counterexamples to \eqref{e:57} that we are aware of are of rather non-subcapacitary nature, that is, all of them also fail to satisfy \eqref{e:56}. Therefore one can ask whether the local charge-energy inequality can be transformed into some sufficient conditions for the maximal function inequality. This might not be straightforward, since \eqref{e:56} and \eqref{e:57} scale differently. 

Another question is connected to the nature of the rectangular sets on which we test the trace inequality. In the one-dimensional case it is sufficient that \eqref{e:56} holds for all single rectangles (dyadic intervals). One would expect that a single box test, \eqref{e:56} for rectangles, is no longer sufficient when $d\geq 2$. One might compare with the description of Carleson measures for the Hardy space on the bidisc \cite{chang1979}, but note we have been discussing the dual inequality of the Hardy inequality \eqref{e:53}. The single box test for \eqref{e:53} is just a subcapacitary condition, and it fails to be sufficient already for $d=1$, since, generally, capacity is not additive. In particular, the single box test for \eqref{e:53} on an unweighted dyadic tree asks that $\mu(Q) \lesssim \frac{1}{\log\frac{1}{|Q|}}$. However, if we ask a little bit more from this single box test, we can obtain sufficient conditions for $\mu$ to satisfy the trace inequalities \eqref{e:53} and \eqref{e:54}.
\begin{proposition}\label{p:2}
Let $\pi$ be a standard polynomial weight, and $\varphi:[0,2]^d\rightarrow\mathbb{R}_+$ be a function, increasing in each variable, such that
\begin{equation}\label{e:p2.1}
\begin{split}
&\int_{[0,2]^d}\frac{\varphi(t_1,\dots,t_d)}{t_1^{1+s_1}\cdot\dots\cdot t_d^{1+s_d}}\,dt_1\dots dt_d < +\infty.
\end{split}
\end{equation}
Then, if $\mu\geq0$ is a Borel measure on $(\partial T)^d$ satisfying
\begin{equation}\label{e:p2.3}
\mu(Q_{\alpha}) \leq \varphi(M(Q_{\alpha_1}),\dots,M(Q_{\alpha_d})),
\end{equation}
for any $\alpha = (\alpha_1,\dots,\alpha_d)\in T^d$, then $\mu$ is a trace measure for Hardy inequality \eqref{e:53}.
\end{proposition}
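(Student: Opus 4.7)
My plan is to prove \eqref{e:53} by upgrading the hypothesis to the much stronger statement that the potential $\mathbb{V}_\pi^\mu$ is uniformly bounded on $\overline{T}^d$, at which point the trace inequality follows immediately from a Schur-type estimate. The strategy therefore splits into (i) showing that $\sup_x \mathbb{V}_\pi^\mu(x) < +\infty$ implies \eqref{e:53}, and (ii) deducing this potential bound from \eqref{e:p2.1}--\eqref{e:p2.3}.

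For (i) I would pass to the dual inequality \eqref{e:54}. Expanding the square and applying Tonelli as in \eqref{e:337}, one identifies $\|\mathbb{I}^*_\mu g\|^2_{L^2(T^d,d\pi)}$ with the quadratic form
\[
\int_{\overline{T}^d}\int_{\overline{T}^d} g(x)\,g(y)\,d_\pi(x\wedge y)\,d\mu(x)\,d\mu(y),
\]
so \eqref{e:54} is equivalent to $L^2(\mu)$-boundedness of the positive symmetric integral operator with kernel $K(x,y):=d_\pi(x\wedge y)$. Since $K\ge 0$ and is symmetric, Schur's test with the constant weight $h\equiv 1$ bounds its operator norm by $\sup_x \int K(x,y)\,d\mu(y) = \sup_x \mathbb{V}_\pi^\mu(x)$, which reduces the whole problem to (ii).

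For (ii) I would expand the potential directly. Since $\supp\mu\subset(\partial T)^d$, the hypothesis \eqref{e:p2.3} yields $(\mathbb{I}^*\mu)(\beta) = \mu(Q_\beta) \le \varphi(M(Q_{\beta_1}),\dots,M(Q_{\beta_d}))$ for every $\beta\in T^d$. The ancestors of a fixed $\alpha\in \overline{T}^d$ factor coordinatewise into chains containing exactly one $\beta_j$ at each depth $n_j = d_T(\beta_j)\ge 1$; inserting $\pi(\beta) = \prod_j 2^{s_j n_j}$ and $M(Q_{\beta_j}) = 2^{-n_j+1}$ produces the $\alpha$-free upper bound
\[
\mathbb{V}_\pi^\mu(\alpha)\;\le\;\sum_{n_1,\dots,n_d\ge 1}\varphi\bigl(2^{-n_1+1},\dots,2^{-n_d+1}\bigr)\prod_{j=1}^d 2^{s_j n_j}.
\]

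It then remains to compare this dyadic sum with the integral in \eqref{e:p2.1}. After shifting indices $m_j := n_j-1 \ge 0$ and using that $\varphi$ is increasing in each variable, on the shell $I_m := \prod_j [2^{-m_j},2^{-m_j+1}] \subset (0,2]^d$ one has $\varphi(t)\ge \varphi(2^{-m_1},\dots,2^{-m_d})$ and $\prod_j t_j^{-(1+s_j)} \ge \prod_j 2^{(m_j-1)(1+s_j)}$; multiplying by $|I_m| = \prod_j 2^{-m_j}$ dominates each summand by $\int_{I_m} \varphi(t)/\prod_j t_j^{1+s_j}\,dt$ up to a constant depending only on $d$ and $(s_j)$. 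Summing over the tiling $\bigsqcup_m I_m = (0,2]^d$ gives $\mathbb{V}_\pi^\mu(\alpha)\lesssim \int_{(0,2]^d}\varphi(t)/\prod_j t_j^{1+s_j}\,dt < +\infty$ by \eqref{e:p2.1}, uniformly in $\alpha$, which together with (i) closes the argument. I expect the only genuine technicality to be the bookkeeping of this dyadic comparison --- in particular aligning the shells so that monotonicity of $\varphi$ and the decreasing weight $1/\prod_j t_j^{1+s_j}$ point in compatible directions, and absorbing the large-$t$ endpoint shells into a harmless multiplicative constant.
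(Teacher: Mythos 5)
Your proof is correct and follows essentially the same route as the paper: both establish the uniform bound $\sup_{\alpha}\mathbb{V}_{\pi}^{\mu}(\alpha)\lesssim 1$ by comparing the dyadic sum $\sum_{n}\varphi(2^{-n_1},\dots,2^{-n_d})\prod_j 2^{s_jn_j}$ with the integral \eqref{e:p2.1}, and then deduce the dual inequality \eqref{e:54}, your Schur test with weight $h\equiv 1$ being exactly the Cauchy--Schwarz step the paper performs on $\left(\int_{Q_\alpha}g\,d\mu\right)^2$. The only cosmetic differences are that you bound the potential at every $\alpha\in\overline{T}^d$ via the product-of-chains structure of $\mathcal{P}(\alpha)$ (and carry out the shell comparison more explicitly), whereas the paper invokes the uniformity of the standard polynomial weight to reduce to a single boundary point.
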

\begin{proof}
First we show that \eqref{e:p2.3} implies that $\mathbb{V}_{\pi}^{\mu}(\omega) \lesssim 1$ for any $\omega\in (\partial T)^d$. The weight $\pi$ is uniform, in particular, it (along with the estimate \eqref{e:p2.3}) depends only on generation numbers of $\beta$, and not on the location of $\beta$ within $T^d$. Therefore it is enough show the boundedness of potential only for one point, say $\omega = (0,\dots,0)$ (i.e. the point that corresponds to the leftmost geodesic taken on each tree $T_j$,\; $j=1,\dots,d$). For any multiindex $n = (n_1,\dots,n_d)\subset \mathbb{N}^d$ there exists a unique point $\beta(n)\geq\omega$ with those exact generation numbers (i.e. $d(\beta_j(n)) = n_j$ and $M(Q_{\beta_j(n)}) = 2^{-n_j}$), hence
\[
\mathbb{V}^{\mu}_{\pi}(\omega) = \sum_{\beta\geq\omega}\mu(Q_{\beta})\pi(Q_{\beta}) \leq \sum_{n\in\mathbb{N}^d}\varphi(2^{-n_1},\dots,2^{-n_d})2^{s_1n_1+\dots+s_dn_d} \lesssim \int_{[0,1]^d}\frac{\varphi(t_1,\dots,t_d)}{t_1^{1+s_1}\cdot\dots\cdot t_d^{1+s_d}}\,dt_1\dots dt_d.
\]
It follows immediately that for any $g\in L^2(\overline{T}^d,\,d\mu)$ one has
\begin{equation}\notag
\begin{split}
&\|\mathbb{I}^*_{\mu}\|^2_{L^2(T^d,\,d\pi)} = \sum_{\alpha\in T^d}\left(\int_{Q_{\alpha}}g\,d\mu\right)^2\pi(Q_{\alpha}) \leq \sum_{\alpha\in T^d}\int_{Q_{\alpha}}g^2\,d\mu\cdot \mu(Q_{\alpha})\pi(Q_{\alpha})\leq\\
&\int_{(\partial T)^d}\sum_{\alpha\geq \omega}\mu(Q_{\alpha})\pi(Q_{\alpha})g^2(\omega)\,d\mu(\omega) \lesssim C_{\mu}\int_{(\partial T)^d}g^2\,d\mu.
\end{split}
\end{equation}
\end{proof}

\end{document}